\theoremstyle{plain}
\newtheorem{thm}{Theorem}[section]
\newtheorem{cor}[thm]{Corollary}
\newtheorem{prop}[thm]{Proposition}
\theoremstyle{definition}
\newtheorem{rem}[thm]{Remark}
\newtheorem{ex}[thm]{Example}
\newcounter{thehypothesis}
\newcounter{theproperty}
\newcommand{\om}{\omega}
\newcommand{\Om}{\Omega}
\renewcommand{\a}{\alpha}
\renewcommand{\b}{\beta}
\newcommand{\im}[1]{{\mathrm{Im}\,  #1  }}
\newcommand{\re}[1]{{\mathrm{Re}\,  #1  }}
\newcommand{\Comment}[2][\empty]{\ifthenelse{\equal{#1}{\empty}}{\todo[color=gray!10]{#2}}{\todo[color=gray!10,#1]{#2}}}
\author{Andriy Haydys\\
        University of Bielefeld
}
\title{Isolated singularities of affine special K\"ahler metrics in two dimensions}
\date{May 3, 2015}
\begin{document}
\maketitle

 \begin{abstract}
 We prove that there are just two types of isolated singularities of special K\"ahler metrics in real dimension two provided the associated holomorphic cubic form does not have essential singularities. 
 We also construct examples of such metrics.  
 \end{abstract}

\section{Introduction}

Special K\"ahler metrics attracted a lot of interest recently both in mathematics and physics, see for instance~\cite{ AlekseevskyEtAl15_QKmetrics, MaciaSwann15_TwistGeom, Neitzke14_NotesNewConstrOfHK} for the most recent works. 
The main source of interest to such metrics is the fact that the total space of the cotangent bundle of the underlying manifold carries a natural hyperK\"ahler metric for which each fiber is a Lagrangian submanifold. 
Such manifolds play an important role in the SYZ-conjecture~\cite{SYZ96_SYZConj}.

Soon after special K\"ahler metrics entered the mathematical scene~\cite{Freed99_SpecialKaehler}, Lu proved~\cite{Lu99_NoteOnSpKaehler} that  there are no complete special K\"ahler metrics besides flat ones. 
This motivates studying singular special K\"ahler metrics as the natural structure on bases of holomorphic Lagrangian fibrations with singular fibers.    
In this paper we study isolated singularities of affine special K\"ahler metrics in the lowest possible dimension.

Recall that a K\"ahler manifold $(M, g, I, \om)$ is called (affine) special K\"ahler, if it is equipped with a symplectic, torsion-free, flat connection $\nabla$ such that 
\begin{equation}\label{Eq_SpecKaehlerCond}
(\nabla_XI)Y=(\nabla_Y I)X
\end{equation}  
for all tangent vectors $X$ and $Y$.
To any special K\"ahler metric one can associate a holomorphic cubic form $\Xi$, which measures the difference between the Levi-Civita connection and $\nabla$. 

Throughout the rest of this paper we assume that $\dim_{\mathbb R}M =2$, i.e. $M$ is a Riemann surface.
Let $m_0$ be an isolated singularity of $g$.
Denote by $n$ the order of $\Xi$ at $m_0$, i.e. $m_0$ is a zero of order $n$ if $n>0$ or $m_0$ is a pole of order $|n|$ if $n<0$ or $\Xi(m_0)$ exists and does not vanish if $n=0$.    
By choosing a holomorphic coordinate $z$ near $m_0$, we can assume that $g$ is a special K\"ahler metric on the punctured disc $B_1^*=B_1(0)\setminus\set{0}$. 
The following is the main result of this paper.
\begin{thm}\label{Thm_SingOfSpecialKaehler}
Let $g=w|dz|^2$ be a special K\"ahler metric on $B_1^*$. 
Assume that $\Xi$ is holomorhic on the punctured disc and the order of $\Xi$ at the origin is $n>-\infty$.  
Then 
\begin{equation}\label{Eq_SingOfsKmetrics}
w=-|z|^{n+1}\log |z| e^{O(1)}\qquad\text{or}\qquad w=|z|^\beta(C+o(1))
\end{equation}
as $z\to 0$, where $C>0$ and $\beta<n+1$.

Moreover, for any $n\in\mathbb Z$ and $\b\in\mathbb R$ such that $\b<n+1$ there is an affine special K\"ahler metric satisfying~\eqref{Eq_SingOfsKmetrics}.
\end{thm}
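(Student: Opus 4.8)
The plan is to recast the special K\"ahler condition as a single scalar PDE for the conformal factor, reduce it to an autonomous equation on a half-cylinder, and then read off the two asymptotic regimes from a conserved ``energy''.

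\textbf{Governing equation and reduction.} Writing $\Xi=\xi\,dz^{3}$, the difference between $\nabla$ and the Levi-Civita connection is built from $\xi$, and flatness, torsion-freeness and symplecticity of $\nabla$ reduce to a Liouville-type equation
\[
\Delta\log w=-c\,\frac{|\xi|^{2}}{w^{2}},\qquad c>0,
\]
with $\Delta=4\partial_z\partial_{\bar z}$; equivalently the Gauss curvature equals $c'|\Xi|_g^{2}\ge 0$. Since the order of $\Xi$ at $0$ is $n>-\infty$, on a smaller punctured disc I may write $\xi=z^{n}h$ with $h$ holomorphic and $h(0)\ne 0$, so $\log|h|$ is harmonic. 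Because $\log|z|$ and $\log|h|$ are harmonic, the substitution $w=|z|^{n+1}|h|\,e^{v}$ (absorbing $c$ into a constant shift of $v$) strips off the weight and $h$; passing to the cylindrical coordinate $\zeta=\log z=t+i\theta$, so that $t=\log|z|\to-\infty$ as $z\to0$ and $\Delta_z=|z|^{-2}\Delta_\zeta$, the equation becomes the autonomous semilinear equation
\[
v_{tt}+v_{\theta\theta}=-\,e^{-2v}
\]
on the half-cylinder $(-\infty,0)\times S^{1}$. Through $w=|z|^{n+1}|h|e^{v}$ the two alternatives in~\eqref{Eq_SingOfsKmetrics} correspond to $v\sim\log(-t)$ and to $v\sim(\beta-n-1)t$.

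\textbf{Conserved energy and angular symmetrization.} Multiplying by $v_t$ and integrating over $S^{1}$ shows that
\[
\mathcal E(t)=\int_{S^{1}}\Big(\tfrac12 v_t^{2}-\tfrac12 v_\theta^{2}-\tfrac12 e^{-2v}\Big)\,d\theta
\]
is independent of $t$; it is the PDE analogue of the mechanical energy of the profile equation $V_{tt}=-e^{-2V}$. The crux is to show that $v$ loses its $\theta$-dependence: writing $v=\bar v(t)+\phi(t,\theta)$ with $\int_{S^{1}}\phi\,d\theta=0$, the zero mode satisfies $\bar v_{tt}=-e^{-2\bar v}\,\overline{e^{-2\phi}}$ while $\phi_{tt}+\phi_{\theta\theta}=-e^{-2\bar v}\big(e^{-2\phi}-\overline{e^{-2\phi}}\big)$. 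Once $\bar v\to+\infty$, the coefficient $e^{-2\bar v}\to0$, the nonzero Fourier modes of $\phi$ obey an asymptotically free equation on the cylinder and decay exponentially (like $e^{t}$), so $\phi\to0$ and $\mathcal E=\lim_{t\to-\infty}\pi(\bar v_t^{2}-e^{-2\bar v})$. I expect this symmetrization --- requiring a priori gradient bounds, a Fourier-mode decay estimate, and a bootstrap that controls $\bar v$ and $\phi$ together --- to be the main obstacle, along with excluding $\mathcal E<0$, where the profile blows down at a finite value of $t$ so that no positive metric survives on all of $B_1^{*}$.

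\textbf{The dichotomy.} On the reduced profile $E:=\tfrac12\bar v_t^{2}-\tfrac12 e^{-2\bar v}=\mathcal E/(2\pi)$ is conserved, and $\bar v\to+\infty$ forces $E\ge0$. If $E>0$ then $\bar v_t\to-\sqrt{2E}$, hence $w\sim C|z|^{\,n+1-\sqrt{2E}}$; setting $\beta=n+1-\sqrt{2E}<n+1$ gives the second alternative with $C>0$, and as $E$ ranges over $(0,\infty)$ every $\beta<n+1$ occurs. If $E=0$ then $(e^{\bar v})_t$ is constant, so $e^{\bar v}\sim-t$, giving $\bar v\sim\log(-t)$ and $w\sim-|z|^{n+1}\log|z|$; restoring the bounded factor $|h|e^{o(1)}$ yields the first alternative $w=-|z|^{n+1}\log|z|\,e^{O(1)}$.

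\textbf{Existence.} The equivalence in the first step is reversible: a positive solution $w$ of the Liouville equation together with a holomorphic cubic form reconstructs a flat, torsion-free, symplectic $\nabla$, since these conditions are local and reduce exactly to the equation. Thus it suffices to produce solutions. Fixing $n\in\mathbb Z$ I take the model form $\xi=z^{n}$, single-valued precisely because $n$ is an integer, so again $v_{tt}+v_{\theta\theta}=-e^{-2v}$, and I build $\theta$-independent solutions by integrating the explicitly solvable $V_{tt}=-e^{-2V}$: the energy $E=\tfrac12(n+1-\beta)^{2}>0$ gives $w\sim C|z|^{\beta}$ for the prescribed $\beta<n+1$, while $E=0$ gives $w\sim-|z|^{n+1}\log|z|$. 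In both cases $V$ stays finite for all $t\in(-\infty,0)$, so $w=|z|^{n+1}e^{V(\log|z|)}>0$ is a genuine metric on $B_1^{*}$, which the reconstruction endows with an affine special K\"ahler structure realizing~\eqref{Eq_SingOfsKmetrics}; nontrivial holonomy of $\nabla$ around the puncture is allowed and poses no obstruction.
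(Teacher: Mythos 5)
Your reduction agrees with the paper's up to the point where the real work begins: both routes arrive at the prescribed--curvature equation $\Delta u=-\tilde K e^{2u}$ for $w=e^{-u}$, with $-\tilde K=16|\Xi_0|^2$ comparable to $|z|^{2n}$ near the origin. The paper gets there through Proposition~2.1 and the triple $(h,u,a)$ of~\eqref{Eq_sKonPunctBall}, whereas you assert the Liouville-type equation directly; your observation that writing $\xi=z^nh$ with $h(0)\neq0$ and absorbing the harmonic factor $\log|z^{n+1}h|$ makes the equation \emph{exactly} (not just asymptotically) autonomous on the half-cylinder is a nice structural point. The real divergence is in how the dichotomy~\eqref{Eq_SingOfsKmetrics} is extracted: the paper invokes McOwen's classification of isolated singularities of $\Delta u=-\tilde Ke^{2u}$ with $-C_1|z|^{2n}\le\tilde K\le -C_2|z|^{2n}$, which is verbatim the statement $u=-\beta\log|z|+c+o(1)$, $\beta<n+1$, or $u=-(n+1)\log|z|-\log\bigl|\log|z|\bigr|+O(1)$, while you attempt to reprove this classification from scratch by an energy and symmetrization argument.

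That attempt has a genuine gap, which you flag yourself. The exponential decay of the nonzero Fourier modes of $v$ is derived \emph{assuming} $\bar v\to+\infty$, but that $\bar v\to+\infty$ (rather than $v$ remaining bounded, tending to $-\infty$, or oscillating) is part of what must be proved, so the argument as written is circular. Worse, the homogeneous equation for the $k$-th mode on the cylinder admits the solution $e^{-|k|t}$, which \emph{grows} as $t\to-\infty$; excluding it requires an a priori sublinear bound on the oscillation of $v$ (or an integral bound coming from $\int_{B_1^*}e^{-2v}|z|^{-2}$), which you do not supply. Without these estimates the conserved quantity $\mathcal E$ does not collapse to the one-dimensional energy $E$, and the case analysis $E<0$, $E=0$, $E>0$ cannot be run; this is exactly the analytic content the paper outsources to the literature. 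The existence half is essentially sound and parallels Example~\ref{Ex_MainExOfSKmetrics}: the radial ansatz with $\xi=z^n$ integrates explicitly, though you should note that the backward solution of $V_t=-\sqrt{2E+e^{-2V}}$ from $t=0$ stays finite on $(-\infty,0)$, and that reconstructing $\nabla$ from $(\xi,w)$ requires exhibiting $\xi$ in the form $\tfrac12\bigl(\tfrac a{2z}-i\,\partial h/\partial z\bigr)$ with $a$ real and $h$ a single-valued real harmonic function, which does hold for $\xi=z^n$.
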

     
We prove this theorem by establishing an intimate relation between special K\"ahler metrics and metrics of non-positive Gaussian curvature and applying the machinery developed for the latter ones. 
This relation allows us in particular to construct explicit examples of special K\"ahler metrics from metrics of \emph{constant} negative curvature. 

\paragraph{Acknowledgements.}

I would like to thank V.~Cort\'{e}s and R.~Mazzeo for helpful discussions and German Research Foundation (DFG) for financial support via CRC 701.

\section{Affine special K\"ahler metrics in local coordinates}

Let $\Omega\subset \mathbb C$ be an open subset, which we equip with the flat metric $g_0=|dz|^2=dx^2+dy^2$. 
Denote by $*$ the corresponding Hodge operator and by $\Delta=\partial_{xx}^2 +\partial_{yy}^2$ the Laplace operator.

\begin{prop}
For any pair $(u,\eta)\in C^\infty(\Om)\times \Om^1(\Om)$ satisfying
 \begin{align}
 d\eta &= 0,\label{Eq_sKSystemIff_deta}\\
 *d*\eta &= 2*(*\eta\wedge du) - 2e^u|\eta|^2,\label{Eq_sKSystemIff_d*eta}\\
 \Delta u &= |2\eta + e^{-u}du|^2e^{2u}.\label{Eq_sKSystemIff_Deltau}
 \end{align}
the metric $g=e^{-u}(dx^2+ dy^2)$ is special K\"ahler. 
Conversely, for any special K\"ahler metric on $\Om$ there exists a solution $(u,\eta)$ of~\eqref{Eq_sKSystemIff_deta}-\eqref{Eq_sKSystemIff_Deltau}.  
\end{prop}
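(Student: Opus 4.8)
The strategy is to encode the flat connection $\nabla$ through its difference from the Levi-Civita connection $\nabla^{g}$ of $g=e^{-u}|dz|^{2}$, i.e.\ through the tensor $\gamma:=\nabla-\nabla^{g}$, and to translate the four defining properties of a special K\"ahler structure into conditions on $\gamma$. In the holomorphic frame one has $\nabla^{g}_{\partial_z}\partial_z=-u_z\,\partial_z$, $\nabla^{g}_{\partial_z}\partial_{\bar z}=0$ and $\nabla^{g}_{\partial_{\bar z}}\partial_{\bar z}=-u_{\bar z}\,\partial_{\bar z}$. Torsion-freeness of $\nabla$ makes $\gamma$ symmetric, while $\nabla\om=0$ together with torsion-freeness is equivalent to total symmetry of the cubic tensor $C(X,Y,Z):=\om(X,\gamma_Y Z)$. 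I would then observe that, in the presence of these two properties, the special K\"ahler symmetry $(\nabla_XI)Y=(\nabla_YI)X$ is equivalent to each $\gamma_X$ being complex antilinear (i.e.\ anticommuting with $I$); in the frame $\partial_z,\partial_{\bar z}$ this forces $\gamma$ to be governed by a single complex function $\Phi$, namely
\[
\gamma_{\partial_z}\partial_z=-2i\,e^{u}\Phi\,\partial_{\bar z},\qquad \gamma_{\partial_z}\partial_{\bar z}=0,\qquad \gamma_{\partial_{\bar z}}\partial_{\bar z}=2i\,e^{u}\overline{\Phi}\,\partial_z,
\]
and $\Phi\,dz^{3}$ represents the associated cubic form. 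With $\gamma$ in this normal form the conditions $\nabla\om=0$ and the special K\"ahler symmetry hold automatically, so the entire content of the Proposition is the translation of flatness of $\nabla$.

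For the direct implication I would set $\Phi:=\eta_z+\tfrac12 e^{-u}u_z$, define $\nabla:=\nabla^{g}+\gamma$ by the formulas above, and compute the single independent curvature quantity $R^{\nabla}(\partial_z,\partial_{\bar z})\partial_z$. A direct computation gives that its $\partial_z$-component equals $u_{z\bar z}-4e^{2u}|\Phi|^{2}$ and its $\partial_{\bar z}$-component equals $2i\,e^{u}\,\partial_{\bar z}\Phi$, the component $R^{\nabla}(\partial_z,\partial_{\bar z})\partial_{\bar z}$ being the complex conjugate. Hence $\nabla$ is flat if and only if
\[
u_{z\bar z}=4e^{2u}|\Phi|^{2}\qquad\text{and}\qquad \partial_{\bar z}\Phi=0.
\]
The first identity is exactly \eqref{Eq_sKSystemIff_Deltau}: since $\Delta u=4u_{z\bar z}$ and the $(1,0)$-part of $2\eta+e^{-u}du$ is $2\Phi\,dz$, both sides equal $16\,e^{2u}|\Phi|^{2}$. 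It remains to match the holomorphicity $\partial_{\bar z}\Phi=0$ with \eqref{Eq_sKSystemIff_deta}--\eqref{Eq_sKSystemIff_d*eta}.

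To carry out this last step I would rewrite the two first-order equations in the complex frame. Using $*d*\eta=\operatorname{div}\eta=4\operatorname{Re}\partial_{\bar z}\eta_z$, one finds that \eqref{Eq_sKSystemIff_deta} is $\operatorname{Im}\partial_{\bar z}\eta_z=0$ and \eqref{Eq_sKSystemIff_d*eta} is $\operatorname{Re}\partial_{\bar z}\eta_z=-2\operatorname{Re}(\eta_z u_{\bar z})-2e^{u}|\eta_z|^{2}$, so that jointly they read $\partial_{\bar z}\eta_z=-\eta_z u_{\bar z}-\eta_{\bar z}u_z-2e^{u}\eta_z\eta_{\bar z}$. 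Differentiating $\Phi=\eta_z+\tfrac12 e^{-u}u_z$, substituting this expression for $\partial_{\bar z}\eta_z$, and using \eqref{Eq_sKSystemIff_Deltau} to eliminate $u_{z\bar z}$, all terms cancel and one is left precisely with $\partial_{\bar z}\Phi=0$; the computation is reversible, so in the presence of \eqref{Eq_sKSystemIff_Deltau} the pair \eqref{Eq_sKSystemIff_deta}--\eqref{Eq_sKSystemIff_d*eta} is equivalent to the holomorphicity of $\Phi$. This proves the first assertion. For the converse I would start from a special K\"ahler metric, pass to an isothermal holomorphic coordinate to write $g=e^{-u}|dz|^{2}$, read off $\Phi$ from the normal form of $\gamma=\nabla-\nabla^{g}$, and define the real $1$-form $\eta$ by $\eta_z:=\Phi-\tfrac12 e^{-u}u_z$ and $\eta_{\bar z}:=\overline{\eta_z}$; flatness of $\nabla$ yields \eqref{Eq_sKSystemIff_Deltau} and $\partial_{\bar z}\Phi=0$, and the equivalence just established returns \eqref{Eq_sKSystemIff_deta}--\eqref{Eq_sKSystemIff_d*eta}.

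I expect the principal difficulty to lie not in any single computation but in the algebraic reduction of the first paragraph: one must check carefully that torsion-freeness, $\nabla\om=0$ and the symmetry of $\nabla I$ really do pin $\gamma$ down to the one--complex--parameter normal form, so that the assignment $(u,\eta)\mapsto(g,\nabla)$ is a genuine bijection rather than a one-sided construction, and one must keep precise track of the factors of $e^{u}$, $i$ and $2$ so that the Gauss-type curvature identity lands exactly on \eqref{Eq_sKSystemIff_Deltau}. A secondary subtlety is that the three equations are entangled: \eqref{Eq_sKSystemIff_deta}--\eqref{Eq_sKSystemIff_d*eta} encode the holomorphicity of the cubic form only after \eqref{Eq_sKSystemIff_Deltau} has been used, so the equivalence must be organised so that \eqref{Eq_sKSystemIff_Deltau} is available at the point where \eqref{Eq_sKSystemIff_deta}--\eqref{Eq_sKSystemIff_d*eta} are converted into $\partial_{\bar z}\Phi=0$.
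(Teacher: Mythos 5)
Your argument is correct, but it follows a genuinely different route from the paper's. The paper never introduces the Levi--Civita connection: it works in the real frame $\partial_x,\partial_y$ with the full connection matrix $\omega_\nabla\in\Omega^1(\Omega;\mathfrak{gl}_2(\mathbb R))$, uses the symmetry $(\nabla_XI)Y=(\nabla_YI)X$ together with torsion-freeness to reduce $\omega_\nabla$ to its two diagonal entries (the off-diagonal ones being $-*\omega_{11}$ and $*\omega_{22}$), then imposes $\nabla\om=0$ (giving $\omega_{11}+\omega_{22}=-du$) and flatness, and finally substitutes $\eta=e^{-u}\omega_{11}$. You instead parametrise $\nabla$ by the difference tensor $\gamma=\nabla-\nabla^{g}$, reduce it by the standard algebra (symmetry, $\om$-skewness, antilinearity with respect to $I$) to a single complex function $\Phi$, and read the three equations as the two components of $R^\nabla=0$: \eqref{Eq_sKSystemIff_Deltau} is the Gauss-type identity $u_{z\bar z}=4e^{2u}|\Phi|^2$, and, modulo it, \eqref{Eq_sKSystemIff_deta}--\eqref{Eq_sKSystemIff_d*eta} are exactly $\partial_{\bar z}\Phi=0$. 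I checked the constants ($|\alpha|^2=4|\alpha_z|^2$, $*d*\eta=4\operatorname{Re}\partial_{\bar z}\eta_z$, $2*(*\eta\wedge du)=-8\operatorname{Re}(\eta_z u_{\bar z})$, and the cancellation giving $\partial_{\bar z}\Phi=0$) and they all come out right. Your route costs the extra algebraic lemma pinning $\gamma$ to its normal form --- which you correctly flag as the delicate point, and which does hold in complex dimension one since a symmetric, $\om$-skew, $I$-commuting piece of $\gamma$ is forced to vanish --- but it buys a more conceptual statement: the system is literally ``$\nabla$ is flat'' split into the curvature equation for $e^{2u}|dz|^2$ plus the holomorphicity of the cubic form, and it yields for free the formula $\Xi=\Phi\,dz^3$ and the bound on the Gaussian curvature that the paper recovers separately in the remark after the proposition and in the proof of Theorem~\ref{Thm_SingOfSpecialKaehler}.
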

\begin{proof} 
In real coordinates $(x,y)$ the connection $\nabla$ can be represented by
\[
\omega_\nabla=
\begin{pmatrix}
\om_{11} & \om_{12}\\
\om_{21} & \om_{22}
\end{pmatrix}
\in \Omega^1(\mathbb R^2; \mathfrak{gl}_2(\mathbb R)).
\]
Then~\eqref{Eq_SpecKaehlerCond} can be written as $[\om(X), I_0](Y)=[\om (Y), I_0]X$, where 
\[I_0= \begin{pmatrix}
0 & -1\\
1 & 0\end{pmatrix}.
\] 
Since this equation is symmetric with respect to $X$ and $Y$, it is enough to check its validity for $(X,Y)=(\frac{\partial}{\partial x}, \frac{\partial}{\partial y})$, which yields
\begin{equation}\label{Eq_SKincoord}
 \begin{aligned}
 &\om_{11} (\tfrac{\partial}{\partial x}) - \om_{22} (\tfrac{\partial}{\partial x}) = 
 -\bigl ( \om_{12} (\tfrac{\partial}{\partial y}) + \om_{21} (\tfrac{\partial}{\partial y}) \bigr),\\
 &\om_{12} (\tfrac{\partial}{\partial x}) + \om_{21} (\tfrac{\partial}{\partial x})=
 \om_{11} (\tfrac{\partial}{\partial y}) - \om_{22} (\tfrac{\partial}{\partial y}).
 \end{aligned}
\end{equation}
Write $\omega_{11}-\omega_{22}=a\, dx+b\, dy$. 
It follows from~\eqref{Eq_SKincoord} that $\omega_{12}+\omega_{21}=b\, dx-a\, dy$. 
Denoting $\omega_{11}+\omega_{22}=p\, dx + q\, dy$, we obtain
\begin{equation}\label{Eq_AuxOm11AndOm22}
\omega_{11} =\frac{p+a}2dx+\frac{q+b}2 dy\quad\text{and}\quad 
\omega_{22} =\frac{p-a}2dx+\frac{q-b}2 dy.
\end{equation}

Furthermore, the torsion of $\nabla$ vanishes if and only if 
$\omega_{12}(\frac \partial{\partial x})=\omega_{11}(\tfrac \partial{\partial y})$ and 
$\omega_{21}(\frac \partial{\partial y})=\omega_{22}(\tfrac \partial{\partial x})$. 
Combing this with~\eqref{Eq_AuxOm11AndOm22} we obtain
\[
\omega_{12} =\frac{q+b}2dx-\frac{p+a}2 dy\quad\text{and}\quad 
\omega_{21} =\frac{b-q}2dx+\frac{p-a}2 dy,
\]
which yields
\[
\omega_\nabla =
 \begin{pmatrix}
 \om_{11} & -*\om_{11}\\
 *\om_{22} & \om_{22}
 \end{pmatrix}.
\]   
Then $\nabla$ is flat if and only if 
\begin{equation}\label{Eq_NablaIsFlat}
 \begin{aligned}
 & d\om_{11} = \om_{11}\wedge\om_{22}, &\qquad & d*\om_{11} = -*\om_{11}\wedge\om_{22} -|\om_{11}|^2 dx\wedge dy,\\
 & d\om_{22} = \om_{22}\wedge\om_{11}, & & d*\om_{22} = -*\om_{22}\wedge\om_{11} -|\om_{22}|^2 dx\wedge dy.
 \end{aligned}
\end{equation}
Here we used a special property of 1-forms in dimension 2, namely the identity $(*\a)\wedge (*\b)=\a\wedge\beta$. 

Furthermore, notice that $\nabla$ preserves the K\"ahler form $\om= 2e^{-u} dx\wedge dy$ if and only if
\begin{equation*} 
d e^{-u}= d\bigl (\om (\tfrac \partial{\partial x}, \tfrac \partial{\partial y})\bigr )=
e^{-u}(\om_{11}+\om_{22})\quad\Leftrightarrow\quad
-du=\om_{11}+\om_{22}.
\end{equation*}
Substituting this in~\eqref{Eq_NablaIsFlat} we obtain 
\begin{equation*} 
 \begin{aligned}
 & d\om_{11} = du\wedge\om_{11}, \\ 
 & d*\om_{11} = *\om_{11}\wedge du -2|\om_{11}|^2 dx\wedge dy,\\
 & \Delta u = -4*(*\om_{11}\wedge du )+4|\om_{11}|^2 + |du|^2= |2\om_{11} + du|^2.
 \end{aligned}
\end{equation*} 
Finally, substitute $\eta = e^{-u}\om_{11}$ to obtain~\eqref{Eq_sKSystemIff_deta}-\eqref{Eq_sKSystemIff_Deltau}.
\end{proof}

Observe that~\eqref{Eq_sKSystemIff_Deltau} implies that the Gaussian curvature of $\tilde g = e^{2u} |dz|^2$ equals $-|2\eta + e^{-u}du|^2$ and therefore is non-positive. 
This relation between special K\"ahler metrics and metrics of non-positive Gaussian curvature is central for the rest of the paper and will be more vivid below.

\begin{cor}\label{Cor_sKOnSimplyConn}
Any pair of functions $(h,u)$ satisfying 
\begin{equation}\label{Eq_sKSystemFor2Funct_hu}
\Delta h=0\quad\text{and}\quad \Delta u=|dh|^2e^{2u},
\end{equation}
on a domain $\Om\subset \mathbb R^2$ determines a special K\"ahler metric on $\Om$. 
Conversely, if $H^1(\Om;\mathbb R)$ is trivial, then any special K\"ahler metric on $\Om$ determines a solution of~\eqref{Eq_sKSystemFor2Funct_hu}.  
\end{cor}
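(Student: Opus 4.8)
The plan is to read the Corollary off the Proposition by choosing $\eta$ as the right explicit expression in $h$ and $u$. The shape of the curvature equation~\eqref{Eq_sKSystemIff_Deltau} suggests arranging $2\eta + e^{-u}du = dh$, that is, setting
\[
\eta = \tfrac12\bigl(dh - e^{-u}du\bigr),
\]
so that the norm appearing in~\eqref{Eq_sKSystemIff_Deltau} is exactly $|dh|^2$. With this substitution in hand, both directions become a matter of checking which of the equations~\eqref{Eq_sKSystemIff_deta}--\eqref{Eq_sKSystemIff_Deltau} translate into which of the equations~\eqref{Eq_sKSystemFor2Funct_hu}.

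For the forward implication I would verify the three hypotheses of the Proposition. Equation~\eqref{Eq_sKSystemIff_deta} holds for free: $dh$ is exact and $e^{-u}du = -d(e^{-u})$ is exact, so $\eta$ is closed regardless of any differential equation. Since $2\eta + e^{-u}du = dh$, the curvature equation~\eqref{Eq_sKSystemIff_Deltau} is literally $\Delta u = |dh|^2 e^{2u}$, the second equation of~\eqref{Eq_sKSystemFor2Funct_hu}. The only real computation is~\eqref{Eq_sKSystemIff_d*eta}: using the two-dimensional identities $du\wedge *du = |du|^2\,dx\wedge dy$, $*dh\wedge du = -\langle dh,du\rangle\,dx\wedge dy$ and $d*df = \Delta f\,dx\wedge dy$, I would evaluate the two sides to get $*d*\eta = \tfrac12\bigl(\Delta h - e^{-u}\Delta u + e^{-u}|du|^2\bigr)$ and $2*(*\eta\wedge du) - 2e^u|\eta|^2 = \tfrac12 e^{-u}|du|^2 - \tfrac12 e^u|dh|^2$. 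Equating these shows that~\eqref{Eq_sKSystemIff_d*eta} is equivalent to
\[
\Delta h = e^{-u}\Delta u - e^u|dh|^2.
\]
Under the hypotheses $\Delta h = 0$ and $\Delta u = |dh|^2 e^{2u}$ the right-hand side is $e^u|dh|^2 - e^u|dh|^2 = 0$, so the equation holds and the Proposition yields that $g = e^{-u}|dz|^2$ is special K\"ahler.

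For the converse I would run this equivalence backwards. Given a special K\"ahler metric on $\Omega$, the Proposition supplies $(u,\eta)$ solving~\eqref{Eq_sKSystemIff_deta}--\eqref{Eq_sKSystemIff_Deltau}, where $u$ is the conformal factor $g = e^{-u}|dz|^2$. The one-form $\alpha := 2\eta + e^{-u}du$ is closed, since $d\eta = 0$ and $e^{-u}du$ is exact; here is the single place the topological hypothesis enters, as $H^1(\Omega;\mathbb R)=0$ lets me integrate $\alpha = dh$ for some $h\in C^\infty(\Omega)$. This rearranges to $\eta = \tfrac12(dh - e^{-u}du)$, so the same substitution is in force. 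Now~\eqref{Eq_sKSystemIff_Deltau} reads $\Delta u = |dh|^2 e^{2u}$ verbatim, and feeding this into the displayed equivalence above — whose derivation used only the shape of $\eta$ and not harmonicity of $h$ — forces $\Delta h = 0$. Thus $(h,u)$ solves~\eqref{Eq_sKSystemFor2Funct_hu}.

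I expect the only delicate point to be the Hodge-star bookkeeping in~\eqref{Eq_sKSystemIff_d*eta}; the rest is formal. It is worth noting that $H^1(\Omega;\mathbb R)=0$ is used exactly once, to produce the potential $h$ in the converse, which explains why it is absent from the forward direction.
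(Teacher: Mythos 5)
Your proof is correct and takes essentially the same route as the paper: your ansatz $\eta = \tfrac12(dh - e^{-u}\,du)$ is precisely the paper's substitution $\eta = df$ with $f = (h+e^{-u})/2$ (equivalently $h = 2f - e^{-u}$), both directions hinge on the same identity relating~\eqref{Eq_sKSystemIff_d*eta} to $\Delta h = 0$ modulo~\eqref{Eq_sKSystemIff_Deltau}, and the topological hypothesis enters in the same single place. The only difference is bookkeeping: you spell out the forward verification that the paper leaves as a one-line assertion.
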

\begin{proof}
Assume $\eta=df$, which is always the case provided $H^1(\Om;\mathbb R)$ is trivial. 
By~\eqref{Eq_sKSystemIff_d*eta} we obtain
\begin{equation*}
\Delta f = 2*(*df\wedge du) - 2e^u|df|^2.
\end{equation*}
Denoting $h=2f - e^{-u}$, we compute:
\begin{equation*}
\begin{aligned}
\Delta h &= 2\Delta f - e^{-u}(-\Delta u + |du|^2)\\
 &= 4*(*df\wedge du) - 4e^u|df|^2 
    -e^{-u}\bigl ( -4e^{2u}|df|^2 +4e^u*(*df\wedge du) \bigr )\\
 &=0.
\end{aligned}
\end{equation*}
Hence, $(h,u)$ solves~\eqref{Eq_sKSystemFor2Funct_hu}.

Conversely, for any solution  $(h,u)$ of~\eqref{Eq_sKSystemFor2Funct_hu} the pair $(u, df)$ solves~\eqref{Eq_sKSystemIff_deta}-\eqref{Eq_sKSystemIff_Deltau}, where $f=(h+e^{-u})/2$. 
\end{proof}


Observe that if $h$ is a constant function, then~\eqref{Eq_sKSystemFor2Funct_hu} reduces to $\Delta u =0$, i.e., the corresponding special K\"ahler metric $g=e^{-u} (dx^2+dy^2)$ is flat. 
Non-trivial examples will be constructed below. 

%

\begin{cor}
Assume $\Om=B_1^*$. 
Then any triple $(h,u, a)$ satisfying
\begin{equation}\label{Eq_sKonPunctBall}
\Delta h=0\quad\text{and}\quad \Delta u =|dh +a\varphi|^2e^{2u},
\end{equation}
where $a\in\mathbb R$ and $\varphi$ is a 1-form generating $H^1(B_1^*;\mathbb R)$, determines a special K\"ahler metric on the punctured disc. 
Conversely, any special K\"ahler metric on the punctured disk determines a solution of~\eqref{Eq_sKonPunctBall}.
\end{cor}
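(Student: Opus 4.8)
The plan is to run the proof of Corollary~\ref{Cor_sKOnSimplyConn} again, now allowing the closed form $\eta$ to carry a nontrivial period since $H^1(B_1^*;\mathbb R)\cong\mathbb R$. Before starting I would fix $\varphi$ to be the \emph{harmonic} generator of $H^1(B_1^*;\mathbb R)$, e.g. $\varphi=d\theta$, so that $d\varphi=0$ and also $d{*}\varphi=0$; the co-closedness will be exactly what lets the simply connected argument go through locally.

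For the forward implication, given a triple $(h,u,a)$ solving~\eqref{Eq_sKonPunctBall}, I would set $f=\tfrac12(h+e^{-u})$ and define
\[
\eta:=df+\tfrac a2\varphi=\tfrac12\bigl(dh+a\varphi-e^{-u}du\bigr).
\]
Then $d\eta=0$ because $d\varphi=0$ and $d(e^{-u}du)=0$, giving~\eqref{Eq_sKSystemIff_deta}; and a one-line computation yields $2\eta+e^{-u}du=dh+a\varphi$, so the second equation of~\eqref{Eq_sKonPunctBall} is precisely~\eqref{Eq_sKSystemIff_Deltau} for the pair $(u,\eta)$. The only genuinely new point is~\eqref{Eq_sKSystemIff_d*eta}. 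Since this is a pointwise identity, it suffices to check it on each simply connected patch $U\subset B_1^*$. On such a $U$ I would write $\varphi=d\psi$ with $\psi$ harmonic (possible because $\varphi$ is closed \emph{and} co-closed) and put $\hat f=f+\tfrac a2\psi$, so that $\eta=d\hat f$ on $U$. The crucial observation is that $\hat h:=2\hat f-e^{-u}=h+a\psi$, whence $\Delta\hat h=\Delta h+a\Delta\psi=0$. The computation in the proof of Corollary~\ref{Cor_sKOnSimplyConn} is reversible: granted~\eqref{Eq_sKSystemIff_Deltau}, which we have already arranged, the equation $\Delta\hat h=0$ is \emph{equivalent} to~\eqref{Eq_sKSystemIff_d*eta} for $\eta=d\hat f$. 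Thus~\eqref{Eq_sKSystemIff_d*eta} holds on each $U$ and hence on all of $B_1^*$, and the Proposition then produces a special K\"ahler metric $g=e^{-u}|dz|^2$.

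For the converse I would start from the solution $(u,\eta)$ of~\eqref{Eq_sKSystemIff_deta}--\eqref{Eq_sKSystemIff_Deltau} supplied by the Proposition, with $g=e^{-u}|dz|^2$. Since $\eta$ is closed and $[\varphi]$ generates $H^1(B_1^*;\mathbb R)$, its period fixes a real number $a$ with $\eta-\tfrac a2\varphi$ exact, say $\eta=df+\tfrac a2\varphi$ for a globally defined $f$. Setting $h:=2f-e^{-u}$ reverses the construction: one checks $dh+a\varphi=2\eta+e^{-u}du$, so~\eqref{Eq_sKSystemIff_Deltau} reads $\Delta u=|dh+a\varphi|^2e^{2u}$, while the same local harmonic-potential argument, run in the opposite direction, gives $\Delta h=0$. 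Hence $(h,u,a)$ solves~\eqref{Eq_sKonPunctBall}.

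The main obstacle, and the only place where the topology of the punctured disc enters, is the verification of~\eqref{Eq_sKSystemIff_d*eta}: the closed form $\eta$ is no longer exact, so one cannot simply introduce a global potential as in the simply connected case. Choosing $\varphi$ harmonic and passing to local potentials is what removes this difficulty, and it is precisely the co-closedness $d{*}\varphi=0$ that keeps the local potentials $\psi$ harmonic, so that the harmonicity of $h$ is neither created nor destroyed in the reduction.
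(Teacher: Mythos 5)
Your proof is correct and follows essentially the same route as the paper: fix the harmonic generator $\varphi$, decompose the closed form as $\eta=df+\tfrac a2\varphi$, set $h=2f-e^{-u}$, and reduce to the computation of Corollary~\ref{Cor_sKOnSimplyConn}. The paper compresses this into one line (``just like in the proof of Corollary~\ref{Cor_sKOnSimplyConn}\dots''), while you usefully spell out why co-closedness of $\varphi$ lets the local-potential argument go through, but the substance is identical.
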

\begin{proof}
Recall that  
\[
\varphi = \frac {ydx -xdy}{x^2+y^2} 
\]
is harmonic 1-form generating $H^1(B_1^*;\mathbb R)$. 
Hence, any closed $\eta\in\Om(B_1^*)$ can be written in the form $\eta = df + \tfrac a2\varphi$ for some $a\in\mathbb R$. 
Just like in the proof of Corollary~\ref{Cor_sKOnSimplyConn} put $h=2f-e^{-u}$ to obtain~\eqref{Eq_sKonPunctBall}.   
\end{proof}

Notice that the second equation of~\eqref{Eq_sKonPunctBall} (as well as~\eqref{Eq_sKSystemFor2Funct_hu}) is the celebrated Kazdan--Warner equation~\cite{KazdanWarner74_CurvFunctOnComp}.

\begin{rem}
Tracing through the proof one easily sees that given a solution of~\eqref{Eq_sKonPunctBall} the corresponding special K\"ahler structure is given by 
\begin{equation}\label{Eq_SKstructExpl}
\begin{aligned}
g &=e^{-u}(dx^2+dy^2), \qquad 
\om_\nabla = \begin{pmatrix}
 \om_{11} & -*\om_{11}\\
 *\om_{22} & \om_{22}
 \end{pmatrix},\\ 
 \om_{11} &= \frac {e^u}2 ( dh +a\varphi) - \frac 12 du,\quad \om_{22}= -\frac {e^u}2 (dh +a\varphi) -\frac 12 du.
\end{aligned}
\end{equation}
\end{rem}
\begin{ex}
Let $h$ be a positive harmonic function. 
It is straightforward to check that the pair $(h,-\log h)$ solves~\eqref{Eq_sKSystemFor2Funct_hu}.
Choosing $h=-\log |z|$ we obtain that $g = -\log |z|\, |dz|^2$ is a special K\"ahler metric on the punctured unit disc.
Special K\"ahler metrics with logarithmic singularities were studied for instance in~\cite{GrossWilson00_LargeCxStrLimits, Loftin05_SingularSemiFlat}. 
\end{ex}
\begin{ex}\label{Ex_MainExOfSKmetrics}
Pick any integer $n$ and consider the harmonic function
\[
h(x,y)=
\begin{cases}
\re z^{n+1} & \text{if } n\neq -1,\\
\log |z| & \text{if } n=-1.
\end{cases}
\]
Clearly, there are some positive constants $C_1$ and $C_2$ such that $-C_1|z|^{2n}\le -|dh|^2\le -C_2 |z|^{2n}$.
Choose a point $z_0\in B_2(0)\setminus B_1(0)$ and a non-positive smooth function $\tilde K$ satisfying
\begin{align*}
 &\tilde K |_{B_1(0)}= -|dh|^2,\qquad \tilde K |_{\mathbb C\setminus B_2(0)} = -1,\\
 -& C_1'|z-z_0|^{1-2n}\le \tilde K\le -C_2'|z-z_0|^{1-2n},
\end{align*}
where $C_1'$ and  $C_2'$ are some positive constants. 
Clearly, $\tilde K$ can be extended as a smooth function to $\mathbb{CP}^1\setminus\set{z_0, 0}$, where we think of $\mathbb{CP}^1$ as $\mathbb C\cup\set{\infty}$.
 
Let $g_0$ be a Riemannian metric on $\mathbb{CP}^1$ such that $g_0=|dz|^2$ on $B_1(0)$. 
Denote by $K_0$ the Gaussian curvature of $g_0$.
By~\cite{McOwen93_PrescribedCurvature}*{Thm.II} for any $\beta\le n+1$ there exists a solution $u$ of 
$\Delta_{g_0} u +\tilde Ke^{2u}= K_0$ such that 
\[
u=
\begin{cases}
-\b \log |z| + c + o(1), &\text{if } \b<n+1,\\
-(n+1)\log |z| -\log\bigl |\log |z| \bigr | + O(1) &\text{if } \b = n+1,
\end{cases}
\qquad \text{as } z\to 0.
\] 
($u$ has also a similar behaviuor near $z_0$.)
Thus, on $B_1(0)$ the pair $(h,u)$ solves~\eqref{Eq_sKSystemFor2Funct_hu}. 
In particular, $g=w|dz|^2=e^{-u}|dz|^2$ is a special K\"ahler metric satisfying~\eqref{Eq_SingOfsKmetrics}.
\end{ex}

\begin{proof}[Proof of Theorem~\ref{Thm_SingOfSpecialKaehler}] 
Let $\pi^{(1,0)}\in\Omega^{1,0}(T_{\mathbb C}B_1^*)$ be the projection onto $T^{1,0}B_1^*$. 
Recall the definition of the holomorphic cubic form: 
\[\Xi= -\om \bigl (\pi^{(1,0)}, \nabla \pi^{(1,0)} \bigr ).
\]
Then, a direct computation yields 
\[
\Xi= \frac {e^{-u}}4\bigl ( *(\omega_{11}-\omega_{22}) -i(\omega_{11}-\omega_{22}) \bigr )dz^2.
\]
Therefore, substituting~\eqref{Eq_SKstructExpl} we obtain $\Xi =\Xi_0dz^3$, where
\begin{align*}
\Xi_0=\frac 12 \Bigl ( \frac a{2z}-\frac{\partial h}{\partial z}i \Bigr).
%
%
\end{align*} 
Notice that $16 |\Xi_0|^2=|dh+a\varphi|^2=:-\tilde K$.

Denoting $w=e^{-u}$, we obtain $\Delta u=- \tilde K e^{2u}$ on the punctured disc. 
 Under the hypothesis of this theorem there exist positive constants $C_1$ and $C_2$ such that the inequalities
\begin{equation}\label{Eq_EstimOnCurvature}
-C_1|z|^{2 n}\le \tilde K\le -C_2|z|^{2n}
\end{equation}
hold on a (possibly smaller) punctured disc.


By~\cite{McOwen93_PrescribedCurvature}*{Appendix~B} we obtain that
\[
u=-\b\log |z| +c +o(1)\qquad\text{or}\qquad u=-(n+1)\log |z|-\log |\log |z||  +O(1),
\]
where $\b<  n +1$.
Since $w=e^{-u}$, we obtain~\eqref{Eq_SingOfsKmetrics}.

The existence of special K\"ahler metrics satisfying~\eqref{Eq_SingOfsKmetrics} has been established in Example~\ref{Ex_MainExOfSKmetrics}.
\end{proof}

We remark in passing that starting from a different perspective, Loftin~\cite{Loftin05_SingularSemiFlat} utilized an equation equivalent to~\eqref{Eq_sKSystemIff_Deltau} to study special K\"ahler metrics on $\mathbb{CP}^1$.

\section{Metrics of constant negative curvature and further examples}

In this section we construct more examples --- in particular explicit --- of special K\"ahler metrics from metrics of \emph{constant} negative Gaussian curvature.

\begin{prop}\label{Prop_ConstNegCurv_sK}
Let $\tilde g=e^{2u}|dz|^2$ be a metric of constant negative Gaussian curvature on a domain $\Om\subset \mathbb R^2$. 
Then $g=e^{-u}|dz|^2$ is special K\"ahler.
\end{prop}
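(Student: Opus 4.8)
The plan is to reduce the statement to Corollary~\ref{Cor_sKOnSimplyConn} by producing, for the given function $u$, an explicit harmonic companion function $h$ so that the pair $(h,u)$ solves the Kazdan--Warner system~\eqref{Eq_sKSystemFor2Funct_hu}.

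First I would recall the standard formula for the Gaussian curvature of a conformal metric: for $\tilde g = e^{2u}|dz|^2$ one has $K_{\tilde g} = -e^{-2u}\Delta u$. This is consistent with the identity already observed just after the first Proposition, where the curvature of $\tilde g$ was written as $-|2\eta + e^{-u}du|^2$, which together with~\eqref{Eq_sKSystemIff_Deltau} equals $-e^{-2u}\Delta u$. Writing the constant negative curvature as $K_{\tilde g}\equiv -\kappa$ with $\kappa>0$, this becomes
\[
\Delta u = \kappa\, e^{2u}.
\]

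The key observation is that the right-hand side already has the shape $|dh|^2 e^{2u}$ appearing in~\eqref{Eq_sKSystemFor2Funct_hu}, provided one can find a harmonic $h$ on $\Om$ with $|dh|^2\equiv\kappa$. Such an $h$ is immediate: take the linear function $h(x,y)=\sqrt{\kappa}\,x$ (or, more generally, $h=\sqrt{\kappa}\,(x\cos\theta + y\sin\theta)$ for any fixed $\theta$). Then $\Delta h = 0$ and $|dh|^2=\kappa$ everywhere on $\Om$, so the pair $(h,u)$ satisfies both equations of~\eqref{Eq_sKSystemFor2Funct_hu}.

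Finally I would invoke the forward direction of Corollary~\ref{Cor_sKOnSimplyConn}, which requires no topological hypothesis on $\Om$: since $(h,u)$ solves $\Delta h=0$ and $\Delta u = |dh|^2 e^{2u}$, it determines a special K\"ahler metric, and by construction this metric is exactly $g=e^{-u}|dz|^2$. There is essentially no serious obstacle here; the entire content is the elementary remark that constant negative curvature linearizes the prescribed-curvature equation into a constant multiple of $e^{2u}$, and that this constant is realized as the squared gradient length of a harmonic (indeed linear) function. As a bonus, plugging concrete constant-curvature metrics (e.g.\ the Poincar\'e metric) into this construction yields the explicit special K\"ahler examples promised at the start of the section.
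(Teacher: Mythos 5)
Your proposal is correct and follows essentially the same route as the paper: the paper normalizes the constant curvature to $-1$ so that $\Delta u = e^{2u}$, takes $h(x,y)=x$, and invokes Corollary~\ref{Cor_sKOnSimplyConn}, while you merely carry the general constant $\kappa$ through by choosing $h=\sqrt{\kappa}\,x$. No gaps; the argument is the same elementary reduction to the Kazdan--Warner system.
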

\begin{proof}
Since $\tilde g$ has a constant negative scalar curvature, say $-1$, we have $\Delta u = e^{2u}$. 
Put $h(x,y)=x$ and observe that the pair $(h,u)$ satisfies~\eqref{Eq_sKSystemFor2Funct_hu}. 
Hence, $g=e^{-u}|dz|^2$ is a special K\"ahler metric. 
\end{proof}
\begin{ex}\label{Ex_PoincareMetric}
Different incarnations of the Poicar\'{e} metric lead to differently looking special K\"ahler metrics, which are gathered in the following table:
\begin{table}[h]
\renewcommand{\arraystretch}{1.3}
\begin{tabular}{|c|c|c|}
\hline Constant negative curvature metric & Domain &  Special K\"ahler metric\\ 
\hline  $\tilde g=(\im z)^{-2}|dz|^2$ & upper half-plane & $g=\im z|dz|^2$ \\ 
\hline $\tilde g= 4(1-|z|^2)^{-2} |dz|^2$ & unit disc & $g= \frac 12(1-|z|^2)|dz|^2$ \\ 
\hline $\tilde g = (|z|\log |z|)^{-2}|dz^2|$ & punctured disc & $ g = -|z|\log |z|\,|dz|^2$ \\ 
\hline 
\end{tabular}
\caption{Poincar\'{e} metrics and corresponding special K\"ahler metrics}
\end{table} 

The special K\"ahler metric appearing in the first row can be found in \cite{Freed99_SpecialKaehler}*{Rem.\,1.20}. 
Particularly interesting for us is the one appearing in the last row, as this yields an example of special K\"ahler metric with an isolated singularity.
\end{ex}
\begin{ex}
Explicit local examples of metrics of constant negative Gaussian curvature with conical singularities can be found in~\cite{KrausRoth08_BehaviurOfSolutions}*{Ex.2.1}. 
For instance, if $\alpha\in(0,1)$, then 
\[
\tilde g= \frac 1{|z|^{2\alpha}}\Bigl (\frac {1-\a}{1-|z|^{2(1-\alpha)}}\Bigr)^2|dz|^2
\]
is such a metric. Hence, 
\[
g = \frac 1{1-\a}   |z|^\alpha  \bigl ({1-|z|^{2(1-\alpha)}}\bigr )\, |dz|^2
\]
is a special K\"ahler metric with conical singularity.
\end{ex}
\begin{ex}\label{Ex_PicardsMetrics}
It is a classical result of Picard~\cite{Picard1893_Delequation} that for any given $n\ge 3$  pairwise distinct points $(z_1, \dots, z_n)$ in $\mathbb C$ and any $n$ real numbers $(\alpha_1, \dots, \alpha_n)$ such that $\alpha_j<1$ and $\sum\alpha_j>2$ there exists a metric of constant negative curvature $\tilde g$ on $\mathbb C\setminus\{ z_1,\dots, z_n \}$ satisfying $\tilde g=|z-z_j|^{-2\alpha_j}(c+o(1))|dz|^2$ near  $z_j$.  
Hence, the corresponding special K\"ahler metric $g$ has a conical singularity near $z_j$:
\[
g=|z-z_j|^{\alpha_j}(c+o(1)) |dz|^2.
\]

Explicit examples of constant negative curvature --- hence special K\"ahler --- metrics on three times punctured complex plane can be found in~\cite{KrausRothSugawa11_MetricsWithConicalSing} and references therein. 

Notice also that it is possible to allow $\alpha_i=1$ changing the asymptotic behaviour correspondingly and to replace $\mathbb C$ by $\mathbb P^1$.
\end{ex}

\bibliography{literatura}

\end{document}